\numberwithin{equation}{section}
\newtheorem{thm}{Theorem}[section]
\newtheorem{lem}[thm]{Lemma}
\newtheorem{prop}[thm]{Proposition}
\newtheorem{defn}[thm]{Definition}
\theoremstyle{definition}
\newtheorem{rem}[thm]{Remark}
\theoremstyle{remark}
\newcommand{\ds}{\displaystyle}
\newcommand{\R}{\mathbb{R}}
\newcommand{\N}{\mathbb{N}}
\newcommand{\de}{\partial}
\newcommand{\eps}{\varepsilon}
\DeclareMathOperator{\dive}{div}
\DeclareMathOperator{\sign}{sign}
\newcommand\restr[2]{{
  \left.\kern-\nulldelimiterspace 
  #1 
  \vphantom{ |} 
  \right|_{#2} 
  }}
{\left\{\begin{array}{@{}l@{}}}{\end{array}\right.}
\def\@makefnmark{} 
\title{Existence and regularity results for a class of non-uniformly elliptic
Robin problems}
\author{Francesco Della Pietra$^*$, Giuseppina di Blasio$^{**}$, Teresa Radice$^*$
\thanks{
Dipartimento di Matematica e Applicazioni ``R. Caccioppoli'', Universit\`a degli studi di Napoli Federico II, Via Cintia, Monte S. Angelo - 80126 Napoli, Italia. Email: f.dellapietra@unina.it, teresa.radice@unina.it \\
$^{**}$Dipartimento di Matematica e Fisica, Universit\`a degli Studi della Campania ``Luigi Vanvitelli'', viale Lincoln, 5 -81100 Caserta, Italia. Email: giuseppina.diblasio@unicampania.it}
}
\begin{document}

\maketitle
\begin{abstract}
\noindent{\textsc{Abstract.}}
In this paper, we study the existence and the summability of solutions to a Robin boundary value problem whose prototype is the following:
\begin{equation*}
\begin{cases}
-\dive(b(|u|)\nabla u)=f &\text{in }\Omega,\\[.2cm]
\ds\frac{\de u}{\de \nu}+\beta u=0 &\text{on }\de\Omega
\end{cases}
\end{equation*}
where $\Omega$ is a bounded Lipschitz domain in $\R^N$, $N>2$, $\beta>0$, $b(s)$ is a positive function which may vanish at infinity and $f$ belongs to a suitable Lebesgue space. The presence of such a function $b$ in the principal part of the operator prevents it from being uniformly elliptic when $u$ is large.
\\

\noindent \textbf{MSC 2020:} 35J25, 35J60, 35J70  \\[.2cm]
\textbf{Key words and phrases:}  Semilinear problems; Robin boundary conditions; degenerate ellipticity
\end{abstract}

\begin{center}
\begin{minipage}{13cm}
\small
\tableofcontents
\end{minipage}
\end{center}

\section{Introduction}
In this paper, we are exploring the existence of solutions to the following Robin boundary value problem:
\begin{equation}
\label{maing}
\begin{cases}
-\dive(A(x,u)\nabla u)=f &\text{in }\Omega,\\[.2cm]
\ds\frac{\de u}{\de \nu}+\beta u=0 &\text{on }\de\Omega
\end{cases}
\end{equation}
where $\Omega$ is a bounded Lipschitz domain in $\R^N$, $N>2$, and $A(x,t)\colon  \overline\Omega\times\R\to \R$ is a bounded Carath\'eodory function such that
\begin{equation}
\label{condA}
     A(x,t) \ge b(|t|),\qquad \text{for a.e. }x\in \overline\Omega,\; \forall t\in\R,
\end{equation}
$b\colon [0,+\infty[\to ]0,+\infty[$ is a bounded continuous function, $\beta>0$, and $\nu$ is the outer normal to $\Omega$ on $\de \Omega$. Moreover, $f\in L^p(\Omega)$, for some $p>1$. Our aim is to prove a priori estimates and existence results for \eqref{maing} under various assumptions on the summability of $f$. 

The main difficulty in solving problem \eqref{maing} is that the ellipticity condition, given in terms of the function $b$, may prevent the operator from being coercive when the solution, represented by $u$, is unbounded. In simpler terms, the operator doesn't meet the conditions needed to guarantee the existence of a solution using classical methods even if the datum has the right summability. However, if $u$ is bounded, the operator becomes coercive, and we can then use classical theory to prove the existence of a weak solution. In this case, the results we obtain are similar to those for the Laplacian equation (with $\theta=0$). 

The model problem of the general setting considered above is given by
\begin{equation}\label{intro}
\begin{cases}
-\dive(b(|u|)\nabla u)=f &\text{in }\Omega,\\[.2cm]
\ds\frac{\de u}{\de \nu}+\beta u=0 &\text{on }\de\Omega
\end{cases}
\end{equation}
and the coefficient $b(|u|)$ is allowed to go to zero when $u$ becomes large.

This kind of problem has been widely studied under Dirichlet homogeneous boundary conditions ($\beta=+\infty$). In this case, it is well–known that the existence of solutions is strictly related to the summability of $b$ (we refer the reader, for example, to \cite{2,1,bg,boc2006,bocbrez,10,mena,gp}, and \cite{18,37,do,cina,merc,s} with additional terms in the equation, or \cite{19,31} with different hypotheses on $b$). 
Also, some existence results have been proved in the evolution (Cauchy-Dirichlet) problem. See, for example, to \cite{pp,dpdb}, while in \cite{17} the case with additional gradient–dependent terms in the equation is addressed.

As far as we know, the degenerate problem with Robin boundary condition has not been studied as well. To fill the gap, we aim to obtain a priori estimates and existence results, enhancing the differences with the Dirichlet case.


First of all, let us recall the definition of solution for \eqref{maing}.
\begin{defn}
We say that a function $u\in H^1(\Omega)$ is a weak solution of \eqref{maing} if 
\begin{equation*}
\int_\Omega A(x,u)\nabla u\cdot \nabla \varphi dx +\beta \int_{\de\Omega}
A(x,u) u\varphi \, \, d \mathcal H^{N-1} =\int_\Omega f \varphi dx,    
\end{equation*}
for all $\varphi\in  H^1(\Omega)$.
\end{defn}

As already mentioned, we need to give extra assumptions on $b$ in order to get existence results. A main role is played by the function
\begin{equation}
    \label{B}
B(t):=\int_0^t b(s)ds, \quad t\ge 0.
\end{equation}

Our first main theorem regards the existence of bounded solutions.

\begin{thm}\label{bounded}
Let $\Omega$ be a Lipschitz bounded open set in $\R^N$.
Suppose that $\sup B=+\infty$ and 
\begin{equation}
    \label{condizione_bordo}
    tb(t)\ge \Gamma B(t),\quad\forall t\ge 0,
\end{equation}
for some $\Gamma>0$. If $f\in L^q(\Omega)$, with  $q>\frac{N}{2}$, then there exists a weak solution $u\in  H^{1}(\Omega)\cap L^{\infty}(\Omega)$ of \eqref{maing}.
\end{thm}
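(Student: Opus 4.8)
The plan is to replace the possibly degenerate operator by a uniformly elliptic one, derive a priori bounds insensitive to the regularization, and pass to the limit. For $n\in\N$ I would set $A_n(x,t):=A(x,t)+\frac1n$, so that $A_n$ is bounded, $A_n\ge\frac1n>0$, and still $A_n(x,t)\ge b(\abs t)$. Freezing the $u$-argument in the coefficient one solves a linear Robin problem by Lax--Milgram — here the uniform ellipticity of $A_n$ and the fact that $\norm{\nabla v}_{L^2(\Omega)}^2+\norm v_{L^2(\de\Omega)}^2$ is an equivalent norm on $H^1(\Omega)$ make the bilinear form coercive — and since the resulting solution map is compact and continuous on $L^2(\Omega)$, Schauder's fixed point theorem yields, for each $n$, a solution $u_n\in H^1(\Omega)$ of
\begin{equation*}
\int_\Omega A_n(x,u_n)\nabla u_n\cdot\nabla\varphi\,dx+\beta\int_{\de\Omega}A_n(x,u_n)u_n\varphi\,d\mathcal H^{N-1}=\int_\Omega f\varphi\,dx,\qquad\forall\varphi\in H^1(\Omega).
\end{equation*}

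The heart of the proof, and the step I expect to be the main obstacle, is a bound $\norm{u_n}_{L^\infty(\Omega)}\le L$ with $L$ independent of $n$; this is where both hypotheses on $b$ are used. Since $b>0$ is continuous, $B(t)=\int_0^t b(s)\,ds$ is a $C^1$, strictly increasing, $\norm b_\infty$-Lipschitz bijection of $[0,+\infty[$ onto $[0,\sup B[\,$, and $\sup B=+\infty$ makes this onto $[0,+\infty[\,$. For $s\ge0$ I would test the equation for $u_n$ with $\varphi=\psi_s:=(B(u_n)-B(s))^+\in H^1(\Omega)$, for which $\nabla\psi_s=b(u_n)\nabla u_n\,\chi_{\{u_n>s\}}$. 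On $\{u_n>s\ge0\}$ the bound $A_n(x,u_n)\ge b(u_n)$ controls the interior term from below by $\int_\Omega\abs{\nabla\psi_s}^2$, while $A_n(x,u_n)u_n\ge b(u_n)u_n\ge\Gamma B(u_n)\ge\Gamma\psi_s$ — which is exactly \eqref{condizione_bordo} — controls the boundary term from below by $\beta\Gamma\int_{\de\Omega}\psi_s^2\,d\mathcal H^{N-1}$; dropping nothing on the right one gets
\begin{equation*}
\int_\Omega\abs{\nabla\psi_s}^2\,dx+\beta\Gamma\int_{\de\Omega}\psi_s^2\,d\mathcal H^{N-1}\le\int_\Omega f\psi_s\,dx .
\end{equation*}
By the trace--Sobolev inequality $\norm w_{L^{2^*}(\Omega)}^2\le C_\Omega\bigl(\norm{\nabla w}_{L^2(\Omega)}^2+\norm w_{L^2(\de\Omega)}^2\bigr)$, with $2^*=\tfrac{2N}{N-2}$, the left-hand side dominates $\norm{\psi_s}_{L^{2^*}(\Omega)}^2$ up to a constant depending only on $\Omega,\beta,\Gamma$; estimating the right-hand side by Hölder on $\{u_n>s\}$ — here $q>\tfrac N2$ gives $q'<\tfrac{N}{N-2}<2^*$ — yields $\norm{\psi_s}_{L^{2^*}(\Omega)}\le C\norm f_{L^q}\abs{\{u_n>s\}}^{1/q'-1/2^*}$. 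Reading this on the super-level sets of $B(u_n)$ and writing $G(\sigma):=\abs{\{B(u_n)>\sigma\}}$ produces, for $0\le\sigma_1<\sigma_2<\infty$, the classical Stampacchia recurrence $G(\sigma_2)\le\bigl(C\norm f_{L^q}/(\sigma_2-\sigma_1)\bigr)^{2^*}G(\sigma_1)^{\theta}$ with $\theta=\tfrac{2^*}{q'}-1$, and the strict inequality $\theta>1$ is again equivalent to $q>\tfrac N2$. Stampacchia's lemma then gives $d>0$, depending only on $\norm f_{L^q},\abs\Omega,N,q,\beta,\Gamma$ (not on $n$), with $\abs{\{B(u_n)>d\}}=0$; inverting $B$ — possible precisely because $\sup B=+\infty$ — yields $u_n^+\le B^{-1}(d)$ a.e., and the same argument applied to $-u_n$, which solves the analogous problem with datum $-f$ and coefficient $(x,t)\mapsto A_n(x,-t)\ge b(\abs t)$, controls $u_n^-$. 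Hence $\norm{u_n}_{L^\infty(\Omega)}\le L:=B^{-1}(d)$ uniformly.

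Once the uniform $L^\infty$ bound is available, the rest is routine. Testing with $\varphi=u_n$ and using $A_n(x,u_n)\ge b(\abs{u_n})\ge\min_{[0,L]}b=:b_0>0$ gives $b_0\bigl(\norm{\nabla u_n}_{L^2(\Omega)}^2+\beta\norm{u_n}_{L^2(\de\Omega)}^2\bigr)\le\int_\Omega f u_n\,dx\le L\norm f_{L^1}$, hence a uniform bound in $H^1(\Omega)$. Extracting a subsequence with $u_n\rightharpoonup u$ in $H^1(\Omega)$, $u_n\to u$ in $L^2(\Omega)$, in $L^2(\de\Omega)$ and a.e., one has $\norm u_{L^\infty}\le L$. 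To pass to the limit in the weak formulation I would use that $A_n(x,u_n)\nabla\varphi\to A(x,u)\nabla\varphi$ strongly in $L^2(\Omega)$ — Carath\'eodory continuity and dominated convergence, since $\abs{A_n}\le\norm A_\infty+1$ — against $\nabla u_n\rightharpoonup\nabla u$ weakly, so the principal term converges; the boundary term converges by dominated convergence, since $\abs{A_n(x,u_n)u_n\varphi}\le(\norm A_\infty+1)L\abs\varphi\in L^1(\de\Omega)$. This produces a weak solution $u\in H^1(\Omega)\cap L^\infty(\Omega)$ of \eqref{maing}. The only genuinely delicate point is the uniform $L^\infty$ estimate, in particular the role of \eqref{condizione_bordo} in taming the Robin boundary integral so that the trace--Sobolev inequality applies and the Stampacchia iteration closes.
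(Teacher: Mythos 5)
Your proof is correct, and its core --- the uniform bound obtained by testing with truncations of $B(u_n)$, using \eqref{condizione_bordo} to make the Robin boundary term nonnegative and quantitatively useful, and then invoking the Sobolev--Robin inequality \eqref{sob_eigen}, H\"older, and Stampacchia's lemma on the level sets of $B(u_n)$ --- is essentially the paper's Lemma \ref{stimainfinito}, with your $\psi_s=(B(u_n)-B(s))^+$ playing the role of the paper's $G_k(B(u_n))$ (you treat $u_n^-$ by a separate symmetric argument instead of a signed truncation). Where you genuinely diverge is the approximation scheme and hence the conclusion. The paper regularizes by truncating the argument of the coefficient, $A_n(x,t)=A(x,T_n(t))$; once the uniform estimate $B(u_n)\le K$ gives $\|u_n\|_{L^\infty}\le B^{-1}(K)$, taking $n>B^{-1}(K)$ makes the truncation inactive, so $u_n$ is already an exact solution of \eqref{maing} and no limit passage is needed. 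Your regularization $A_n=A+\tfrac1n$ forces a passage to the limit, which you carry out correctly (uniform $H^1$ bound from the $L^\infty$ bound and $b\ge b_0>0$ on $[0,L]$, weak $H^1$ convergence, compact trace embedding, dominated convergence for the coefficient); this costs an extra, routine, compactness step, but it also buys a small simplification: since $B$ is globally Lipschitz, your test functions are admissible without first knowing $u_n\in L^\infty(\Omega)$, whereas the paper's lemma is stated for bounded approximate solutions. One detail to tighten in your Schauder step: with the coefficient frozen at $v\in L^2(\Omega)$ only, the boundary term $\beta\int_{\de\Omega}A_n(x,v)\,u\varphi\, d\mathcal H^{N-1}$ is not defined, because $v$ has no trace; the standard fix is to run the fixed point on pairs $(v,w)\in L^2(\Omega)\times L^2(\de\Omega)$, freezing $v$ in the interior coefficient and $w$ in the boundary one, and using Rellich together with the compact trace embedding \eqref{traceembeddq} for compactness. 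This is routine and at the same level of detail that the paper itself delegates to ``classical arguments,'' so it is not a gap in substance.
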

\begin{rem}
We emphasize that in the Dirichlet case, a general existence result of bounded solution with $f\in L^{\frac{N}{2}+\eps}(\Omega)$ can be obtained just by assuming that $\sup B=+\infty$, that is $b\not\in L^1(0,+\infty)$ (\cite{2,1,bocbrez,10}). Actually, in the Robin case the condition \eqref{condizione_bordo} is necessary to have the existence of bounded solutions (see Section \ref{esempiomain}).
\end{rem}

In order to study the existence of an unbounded solution we reduce ourselves to the case 
\begin{equation}
    \label{b(u)}
    b(s)=\frac{1}{(1+s)^{\vartheta}}, \quad s\ge 0, \quad \text{for some }0<\vartheta\leq 1.
\end{equation}
\begin{rem}
If we consider $b(s)$ as in \eqref{b(u)}, the condition \eqref{condizione_bordo} is satisfied if $0<\vartheta<1$ (with $\Gamma=1-\vartheta$), otherwise, in the case $\vartheta=1$, it is false (see again the example in Section \ref{esempiomain}). 
\end{rem}

\begin{thm}
\label{unbouded}
Let $\Omega$ be a Lipschitz bounded open set in $\R^N$. Let us consider problem \eqref{maing} with \eqref{condA} and $b(u)$ as in \eqref{b(u)}.
If  $f\in L^q(\Omega)$, with 
\begin{equation}
\label{condizioneq}
    \frac{2N}{N+2-\vartheta(N-2)}\leq q<\frac{N}{2},
\end{equation} 
then there exists a weak solution $u\in H^1(\Omega)\cap L^{q^{**}(1-\vartheta)}(\Omega) $ of {\color{magenta}\eqref{maing}}, where $q^{**}=\frac{qN}{N-2q}$.
\end{thm}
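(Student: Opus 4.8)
\emph{Strategy.} I would approximate \eqref{maing} by uniformly elliptic Robin problems, establish a single a priori bound for the approximating solutions by testing with a power of $u$ whose exponent is exactly the one prescribed by \eqref{condizioneq}, and then pass to the limit.

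\emph{Approximation.} For $k\in\N$ set $T_k(s):=\max\{-k,\min\{k,s\}\}$ and $f_k:=T_k f$, and consider the problem obtained from \eqref{maing} by replacing $A(x,u)$ with $A(x,T_k u)$ and $f$ with $f_k$. Since $b$ is decreasing, this coefficient is bounded and bounded from below by the positive constant $b(k)$, so Theorem \ref{bounded} applies to it --- with the constant $b(k)$ in the role of $b$, for which $\sup B=+\infty$ and \eqref{condizione_bordo} holds with $\Gamma=1$, and with $f_k\in L^\infty(\Omega)\subset L^q(\Omega)$, $q>N/2$ --- and yields a solution $u_k\in H^1(\Omega)\cap L^\infty(\Omega)$. (For a fixed $k$ the $L^\infty$ bound need not be uniform; it is used only to justify the test function below.)

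\emph{A priori estimate.} Put $\lambda:=\dfrac{N(1-\vartheta)(q-1)}{N-2q}$, $q':=\dfrac{q}{q-1}$, $2^*:=\dfrac{2N}{N-2}$. An elementary computation shows that \eqref{condizioneq} is equivalent to $\lambda\ge 1+\vartheta$ (in particular $\lambda\ge 1$), and that for this $\lambda$ one has $\lambda q'=\tfrac12\,2^*(\lambda+1-\vartheta)=q^{**}(1-\vartheta)$; note also that the interval in \eqref{condizioneq} is empty when $\vartheta=1$, so we may assume $0<\vartheta<1$. Since $u_k\in L^\infty(\Omega)$, the function $\varphi_k:=[(1+|u_k|)^{\lambda}-1]\sign(u_k)$ is an admissible test function in the $k$-th problem. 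Using $A(x,T_k u_k)\ge b(|T_k u_k|)\ge b(|u_k|)=(1+|u_k|)^{-\vartheta}$, the interior term is bounded below by $c\|\nabla z_k\|_{L^2(\Omega)}^2$, where $z_k:=(1+|u_k|)^{(\lambda+1-\vartheta)/2}$; the boundary term is bounded below by $c\|z_k\|_{L^2(\partial\Omega)}^2-C$, via the elementary inequality $s(1+s)^{-\vartheta}[(1+s)^{\lambda}-1]\ge c(1+s)^{\lambda+1-\vartheta}-C$ for $s\ge0$; and, by H\"older, the right-hand side is bounded above by $\|f\|_{L^q}\,\|1+|u_k|\|_{L^{\lambda q'}(\Omega)}^{\lambda}=\|f\|_{L^q}\,\|z_k\|_{L^{2^*}(\Omega)}^{\mu}$, where $\mu:=\tfrac{2\lambda}{\lambda+1-\vartheta}\in(1,2)$ and the identity $\lambda q'=\tfrac12\,2^*(\lambda+1-\vartheta)$ was used. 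Combining these with the Sobolev inequality $\|z_k\|_{L^{2^*}(\Omega)}^2\le C\bigl(\|\nabla z_k\|_{L^2(\Omega)}^2+\|z_k\|_{L^2(\partial\Omega)}^2\bigr)$ on the left gives
\[
c\,\|z_k\|_{L^{2^*}(\Omega)}^2\le C+\|f\|_{L^q}\,\|z_k\|_{L^{2^*}(\Omega)}^{\mu},
\]
and, as $\mu<2$, this forces $\|z_k\|_{L^{2^*}(\Omega)}\le C$, whence also $\|z_k\|_{H^1(\Omega)}\le C$, uniformly in $k$. Finally, $\lambda+1-\vartheta\ge2$ gives $1+|u_k|\le z_k$ and $1+|u_k|=z_k^{2/(\lambda+1-\vartheta)}$ with exponent $\le1$, so that $\|u_k\|_{L^{q^{**}(1-\vartheta)}(\Omega)}\le C$, while $|\nabla u_k|=\tfrac{2}{\lambda+1-\vartheta}(1+|u_k|)^{(1+\vartheta-\lambda)/2}|\nabla z_k|\le|\nabla z_k|$ gives $\|u_k\|_{H^1(\Omega)}\le C$.

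\emph{Passage to the limit and main obstacle.} Along a subsequence, $u_k\rightharpoonup u$ in $H^1(\Omega)$, $u_k\to u$ in $L^2(\Omega)$ and in $L^2(\partial\Omega)$ (compactness of the trace), and a.e.\ in $\Omega$ and on $\partial\Omega$; since $A$ is a bounded Carath\'eodory function, $A(x,T_k u_k)\to A(x,u)$ a.e.\ and boundedly, hence $A(x,T_k u_k)\nabla u_k\rightharpoonup A(x,u)\nabla u$ in $L^2(\Omega)$ and $A(x,T_k u_k)u_k\to A(x,u)u$ in $L^2(\partial\Omega)$, and $f_k\to f$ in $L^q(\Omega)$. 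Passing to the limit in the weak formulation, first with test functions $\varphi\in H^1(\Omega)\cap L^\infty(\Omega)$ and then, by truncation and using $q'\le 2^*$, with arbitrary $\varphi\in H^1(\Omega)$, shows that $u\in H^1(\Omega)\cap L^{q^{**}(1-\vartheta)}(\Omega)$ is a weak solution. The crux is the a priori estimate: in contrast with the Dirichlet case the boundary integral cannot be discarded, since without a Poincar\'e inequality the interior term controls only $\|\nabla z_k\|_{L^2}$, so the boundary term is needed to recover the full $H^1$-norm of $z_k$; and one must verify that the exponent $\lambda$ imposed by \eqref{condizioneq} is at the same time large enough ($\lambda\ge1+\vartheta$) to pass from a bound on $z_k$ to a bound on $u_k$ in $H^1(\Omega)$, and small enough (equivalently $\vartheta<1$, i.e.\ $\mu<2$) for the displayed inequality to close.
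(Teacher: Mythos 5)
Your proposal is correct and follows essentially the same route as the paper: approximate, test with $[(1+|u|)^{p}-1]\sign u$ for the same exponent $p=\lambda=\frac{N(1-\vartheta)(q-1)}{N-2q}$, bound the boundary term from below to invoke the Robin–Sobolev inequality \eqref{sob_eigen}, absorb the right-hand side using $pq'=(p+1-\vartheta)\frac{2^*}{2}$, note that \eqref{condizioneq} is exactly $p\ge 1+\vartheta$ to get the $H^1$ bound, and pass to the limit via weak $H^1$ convergence plus the compact trace embedding. The only differences (truncating the coefficient as well as the datum, and using an inequality with an additive constant for the boundary term instead of the paper's exact inequality \eqref{diseg}) are cosmetic.
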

The previous result assures that the solution $u$ belongs to the energy space $H^1$, which cannot be directly derived from the equation since $u$ may be unbounded.

If one decreases the summability of the datum, the solutions do not necessarily belong anymore to the energy space. More precisely we get the following result.
\begin{thm}
 \label{special}
Let $\Omega$ be a Lipschitz bounded open set in $\R^N$. Let us consider problem \eqref{maing} with \eqref{condA} and $b(u)$ as in \eqref{b(u)}. If  $f\in L^q(\Omega)$ with 
\begin{equation}
\label{condizioneq2}
  \frac{2N-N\vartheta}{N+2-N\vartheta}  \leq q<\frac{2N}{N+2-\vartheta(N-2)},
\end{equation} 
then there exists a solution $u\in W^{1,s}(\Omega)$ of \eqref{maing}, where $s=\frac{2N-N\vartheta}{N-\vartheta}<2$ in the sense that
\begin{equation}
\label{streuza}
\int_\Omega A(x,u)\nabla u\cdot \nabla \varphi dx +\beta \int_{\de\Omega}
A(x,u){u\varphi} d \mathcal H^{N-1} =\int_\Omega f \varphi dx,   
\end{equation}
for all $\varphi\in  W^{1,s'}(\Omega)$.
 \end{thm}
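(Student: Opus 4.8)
The plan is to argue by approximation: regularise the datum, remove the degeneracy of the operator, establish \emph{a priori} bounds in $W^{1,s}(\Omega)$ uniform in the approximation parameter, and pass to the limit; the delicate point will be the almost‑everywhere convergence of the gradients. For $n\in\N$ I would take $f_n:=T_n(f)$, the truncation of $f$ at level $n$, so that $f_n\in L^\infty(\Omega)$, $\|f_n\|_{L^q(\Omega)}\le\|f\|_{L^q(\Omega)}$ and $f_n\to f$ in $L^q(\Omega)$, and consider the approximating Robin problems
\[
-\dive\big(A(x,T_n(u_n))\nabla u_n\big)=f_n\ \text{ in }\Omega,\qquad \frac{\de u_n}{\de\nu}+\beta u_n=0\ \text{ on }\de\Omega .
\]
Since $(x,t)\mapsto A(x,T_n(t))$ is a bounded Carath\'eodory function and, by monotonicity of $b$, $A(x,T_n(t))\ge b(n)>0$ for every $t$, each of these problems is uniformly elliptic with bounded datum, hence it admits a weak solution $u_n\in H^1(\Omega)\cap L^\infty(\Omega)$ by the classical Leray--Lions theory together with the Stampacchia $L^\infty$‑bound (alternatively, by Theorem~\ref{bounded}, whose structural hypotheses are met by $b$ as in \eqref{b(u)}). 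The $L^\infty$‑norm of $u_n$ need not be uniform in $n$; what is used repeatedly is the inequality $A(x,T_n(u_n))\ge b(|u_n|)=(1+|u_n|)^{-\vartheta}$, valid a.e.\ in $\Omega$ and $\mathcal H^{N-1}$‑a.e.\ on $\de\Omega$.

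For the \emph{a priori} estimates I would test the $n$‑th equation with $\varphi_n:=\big[(1+|u_n|)^{\gamma}-1\big]\sign(u_n)\in H^1(\Omega)\cap L^\infty(\Omega)$, where the exponent‑matching below forces
\[
\gamma:=\frac{(1-\vartheta)N(q-1)}{N-2q}>0,\qquad \mu:=\frac{\gamma+1-\vartheta}{2}\in(0,1),\qquad w_n:=(1+|u_n|)^{\mu}-1 .
\]
The boundary term $\beta\int_{\de\Omega}A(x,T_n(u_n))\,|u_n|\big[(1+|u_n|)^{\gamma}-1\big]\,d\mathcal H^{N-1}$ is non‑negative and, by the lower bound on $A$, is bounded below by $c\,\|w_n\|_{L^2(\de\Omega)}^2-C$, while the bulk term equals $\int_\Omega A(x,T_n(u_n))\gamma(1+|u_n|)^{\gamma-1}|\nabla u_n|^2\,dx\ge c\,\|\nabla w_n\|_{L^2(\Omega)}^2$; estimating the right‑hand side by $\|f\|_{L^q(\Omega)}\|1+|u_n|\|_{L^{\gamma q'}(\Omega)}^{\gamma}$ one gets
\[
\|\nabla w_n\|_{L^2(\Omega)}^2+\|w_n\|_{L^2(\de\Omega)}^2\le C\|f\|_{L^q(\Omega)}\,\|1+|u_n|\|_{L^{\gamma q'}(\Omega)}^{\gamma}+C .
\]
The exponents are chosen so that $\mu\,2^{*}=\gamma q'=(1-\vartheta)q^{**}$ and $2\mu-\gamma=1-\vartheta>0$; hence the trace--Sobolev equivalence of norms on the Lipschitz set $\Omega$ turns the left‑hand side into a lower bound for $\|w_n\|_{L^{2^{*}}(\Omega)}^2$, and, since $2\mu>\gamma$, the resulting inequality closes and yields $\|1+|u_n|\|_{L^{(1-\vartheta)q^{**}}(\Omega)}\le C$ and $\|w_n\|_{H^1(\Omega)}\le C$, uniformly in $n$. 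Finally, writing $\nabla u_n=\mu^{-1}(1+|u_n|)^{1-\mu}\nabla w_n$ and applying H\"older's inequality with exponents $\tfrac2s$ and $\tfrac{2}{2-s}$ — legitimate precisely because $q<\tfrac{2N}{N+2-\vartheta(N-2)}$ makes $s<2$ — one obtains $\|\nabla u_n\|_{L^{s}(\Omega)}\le C$ with $s=\tfrac{N(2-\vartheta)}{N-\vartheta}$ (the computation in fact produces a bound in a slightly larger Lebesgue space when $q$ is in the interior of the admissible range, which is then relaxed to $L^{s}$; as $q\uparrow\tfrac{2N}{N+2-\vartheta(N-2)}$ one recovers $s\uparrow2$, consistently with Theorem~\ref{unbouded}). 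Thus $\{u_n\}$ is bounded in $W^{1,s}(\Omega)$.

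Up to a subsequence, $u_n\rightharpoonup u$ in $W^{1,s}(\Omega)$, $u_n\to u$ in $L^{r}(\Omega)$ for every $r<s^{*}$ and a.e.\ in $\Omega$, and, by compactness of the trace $W^{1,s}(\Omega)\hookrightarrow\hookrightarrow L^1(\de\Omega)$, also $u_n\to u$ a.e.\ on $\de\Omega$, with $\{u_n|_{\de\Omega}\}$ bounded in $L^{s_\partial}(\de\Omega)$, $s_\partial=\tfrac{(N-1)s}{N-s}>1$. By continuity and boundedness of $A$, $A(x,T_n(u_n))\to A(x,u)$ a.e.\ and in every $L^{r}(\Omega)$, $r<\infty$. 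To pass to the limit in the flux one proves $\nabla u_n\to\nabla u$ a.e.\ in $\Omega$ by the usual Boccardo--Murat argument: testing the difference of the equations for $u_n$ with $T_\lambda(u_n-\Psi_j)$, where $\Psi_j$ is a suitable $H^1(\Omega)\cap L^\infty(\Omega)$ approximation of $u$, one uses that on $\{|u_n|\le k\}$ the operator is uniformly elliptic, while the contribution of $\{|u_n|>k\}$ and the boundary integrals (which have a favourable sign) are absorbed by the bounds of the previous step, and one concludes by a diagonal and Fatou‑type argument. Then $A(x,T_n(u_n))\nabla u_n\to A(x,u)\nabla u$ in $L^1(\Omega)$ and weakly in $L^{s}(\Omega)$ (Vitali, using $s>1$), and $A(x,T_n(u_n))u_n\rightharpoonup A(x,u)u$ in $L^{s_\partial}(\de\Omega)$; since every $\varphi\in W^{1,s'}(\Omega)$ satisfies $\nabla\varphi\in L^{s'}(\Omega)$ and, by the trace embedding, $\varphi|_{\de\Omega}\in L^{(s_\partial)'}(\de\Omega)$, and since $f_n\varphi\to f\varphi$ in $L^1(\Omega)$, passing to the limit in the weak formulation written for $u_n$ against $\varphi$ yields \eqref{streuza} for all $\varphi\in W^{1,s'}(\Omega)$.

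The hard part will be the almost‑everywhere convergence of the gradients: the Leray--Lions monotonicity method cannot be used globally because $A$ may degenerate on the region where $u$ is large and because the Robin boundary term is itself nonlinear in the unknown, so the argument has to be localised on the sublevel sets $\{|u_n|\le k\}$ (where uniform ellipticity is available), with the degenerate tail and the boundary terms controlled by the uniform $W^{1,s}$ and $L^{(1-\vartheta)q^{**}}$ estimates. A secondary technical difficulty is the endpoint $q=\tfrac{2N-N\vartheta}{N+2-N\vartheta}$ of the admissible range, where $\gamma=1$ and the bootstrap in the \emph{a priori} estimates becomes a borderline equality of exponents, so that one must either keep track of the sharp constants or first establish a preliminary qualitative bound.
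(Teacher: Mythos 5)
Your proposal reaches the right estimates, but by a genuinely different route on both main steps, and one of the two detours is unnecessary. For the a priori bound you test with $[(1+|u_n|)^{\gamma}-1]\sign u_n$, $\gamma=\frac{(1-\vartheta)N(q-1)}{N-2q}$, i.e.\ you push the computation of Lemma \ref{stimaLq} into the range $1\le\gamma<1+\vartheta$ (note that $\gamma\ge 1$ is exactly the lower bound in \eqref{condizioneq2}) and then recover $\|\nabla u_n\|_{L^s(\Omega)}$ by H\"older from the weighted gradient bound; your exponent bookkeeping ($\mu\,2^*=\gamma q'=(1-\vartheta)q^{**}$, and $(1-\mu)\tfrac{2s}{2-s}\le 2^*\mu$ precisely when $\gamma\ge 1$) checks out, and the absorption closes because $2\mu-\gamma=1-\vartheta>0$, with the same borderline at $\vartheta=1$ that also affects the paper's own absorption. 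The paper instead (Lemma \ref{stimaLs}) tests simply with $u_n$, splits $|\nabla u_n|^s$ by H\"older against the weight $(1+|u_n|)^{\vartheta s/2}$, uses the Poincar\'e--Wirtinger inequality \eqref{poinc} and the identity $s^*=\frac{\vartheta s}{2-s}$, and closes a self-improving inequality directly in $\|u_n\|_{W^{1,s}(\Omega)}$. Your variant costs a boundary inequality of type \eqref{diseg} for the exponent $\gamma$, but buys the extra information that $u_n$ is bounded in $L^{(1-\vartheta)q^{**}}(\Omega)$ (and a slightly better gradient exponent in the interior of the range), so it is a legitimate, in fact more informative, alternative; also your concern about the endpoint $q=\frac{2N-N\vartheta}{N+2-N\vartheta}$ is unfounded, since there only the H\"older exponents coincide while the absorption still enjoys the strict gap $1-\vartheta$ (for $\vartheta<1$).

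Where you genuinely overshoot is the passage to the limit. The ``hard part'' you announce --- almost everywhere convergence of the gradients via a Boccardo--Murat argument adapted to the Robin, degenerate setting --- is not needed, and as written it is the one step of your proof that remains a sketch. The operator is linear in $\nabla u$: the coefficient $A(x,T_n(u_n))$ is bounded and converges a.e.\ to $A(x,u)$, hence for every fixed $\varphi\in W^{1,s'}(\Omega)$ one has $A(x,T_n(u_n))\nabla\varphi\to A(x,u)\nabla\varphi$ strongly in $L^{s'}(\Omega)$ by dominated convergence, and pairing this with $\nabla u_n\rightharpoonup\nabla u$ in $L^s(\Omega)$ already yields the convergence of the principal term in \eqref{streuza}; the boundary and source terms are handled exactly as you do, via the trace embedding and a.e.\ convergence on $\de\Omega$. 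This is what the paper does (weak $W^{1,s}$ convergence plus strong convergence of the traces), and it lets you delete the Boccardo--Murat step altogether; if you insisted on keeping that route, the step would have to be carried out in detail, since the standard argument is formulated for Dirichlet problems and its adaptation to the nonlinear Robin boundary term and the degenerate coefficient is not automatic.
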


The paper is organized as follows: Section 2 contains some definitions and properties that will be used in the next sections; in Section 3 we prove our results, distinguishing the cases of bounded and unbounded solutions. Finally, Section 4 is devoted to analysing an example of our problem.

\section{Preliminaries}

We start by recalling the definitions of the Marcinkiewicz spaces and their properties (see \cite{hunt} for more details).

Let us assume that $\Omega$ is an open set in $\R^N$, $N\geq2$. Given $1<p<+\infty$ the  Marcinkiewicz  space $L^{p,\infty}(\Omega)$ consists in all measurable functions $f$ defined on $\Omega$ such that
\begin{equation*}\label{2.1}
\|f\|^p_{p,\infty}=\sup_{t>0}t^{ p}\mu_f(t)<+\infty
\end{equation*}
where $\mu_f(t)=\left| \left\{ x\in \Omega: |f(x)|>t \right\}\right|$ is the distribution function of $f$.
It holds that when $\Omega$ is bounded
\begin{equation*}\label{LorLebEmb}
L^p(\Omega) \subset L^{p,\infty}(\Omega)\subset L^r(\Omega),
\end{equation*}
whenever $1< r<p< \infty.$

We recall the following version of the H\"older inequality proved in \cite[Proposition A.1]{dos1}.
\begin{prop}
\label{holder}
	Let $1<p, p'<\infty$ be such that $\displaystyle \frac1{p}+\frac1{p'}=1$.
	Assume that $f_1, f_2\>:\>\Omega\to\mathbb R$ and $g_1, g_2\>:\>\partial\Omega\to\mathbb R$ are measurable functions satisfying
	$f_1\in L^p(\Omega)$, $f_2\in L^{p'}(\Omega)$, $g_1\in L^p(\partial\Omega, \lambda)$ and $g_2\in L^{p'}(\partial\Omega, \lambda)$. Then $f_1f_2\in L^1(\Omega)$, $g_1g_2\in L^1(\partial\Omega, \lambda)$ and
	\begin{multline*}
		\int_\Omega|f_1f_2|\, dx+\int_{\partial\Omega}\lambda(x)|g_1g_2|\, d\mathcal H^{N-1}\\
		\le \left[\int_\Omega|f_1|^p\, dx+\int_{\partial\Omega}\lambda(x)|g_1|^p\, d\mathcal H^{N-1}\right]^{\frac1p}\left[\int_\Omega|f_2|^{p'}\, dx+\int_{\partial\Omega}\lambda(x)|g_2|^{p'}\, d\mathcal H^{N-1}\right]^{\frac1{p'}}.
	\end{multline*}
\end{prop}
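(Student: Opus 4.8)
The plan is to deduce the claimed inequality from the ordinary Hölder inequality applied on a single measure space obtained by gluing the domain and its boundary. I would introduce the disjoint union $X:=\Omega\sqcup\partial\Omega$, endowed with the measure $\mu$ that coincides with the Lebesgue measure $dx$ on the copy of $\Omega$ and with the weighted measure $\lambda\,d\mathcal H^{N-1}$ on the copy of $\partial\Omega$. One then packages the data into two functions $F,G\colon X\to\mathbb R$ defined by $F=f_1,\ G=f_2$ on $\Omega$ and $F=g_1,\ G=g_2$ on $\partial\Omega$. Under this identification the hypotheses $f_1,g_1\in L^p$ and $f_2,g_2\in L^{p'}$ say exactly that $F\in L^p(X,\mu)$ and $G\in L^{p'}(X,\mu)$, while the two bracketed quantities on the right-hand side of the claim are precisely $\|F\|_{L^p(X,\mu)}^p$ and $\|G\|_{L^{p'}(X,\mu)}^{p'}$.

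With this dictionary in place, the integrability assertions $f_1f_2\in L^1(\Omega)$ and $g_1g_2\in L^1(\partial\Omega,\lambda)$ follow from the classical scalar Hölder inequality on each piece, and the inequality to be proved reduces to the single estimate
\begin{equation*}
\int_X|FG|\,d\mu\le \|F\|_{L^p(X,\mu)}\,\|G\|_{L^{p'}(X,\mu)},
\end{equation*}
which is nothing but Hölder's inequality on the measure space $(X,\mu)$. Expanding $\int_X|FG|\,d\mu=\int_\Omega|f_1f_2|\,dx+\int_{\partial\Omega}\lambda|g_1g_2|\,d\mathcal H^{N-1}$ and likewise the two norms recovers the statement verbatim.

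Should one wish to avoid the auxiliary space, the same conclusion comes directly from Young's inequality $ab\le \frac{a^p}{p}+\frac{b^{p'}}{p'}$, valid for $a,b\ge 0$. After normalizing so that both bracketed factors on the right equal $1$ (the cases in which one of them vanishes being trivial), I would apply Young pointwise---to $|f_1f_2|$ on $\Omega$ and to $\lambda|g_1g_2|$ on $\partial\Omega$---then integrate and add. Using the normalization, the resulting right-hand side collapses to $\frac1p+\frac1{p'}=1$, which is exactly the product of the normalized factors.

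The only point requiring care is the measure-theoretic bookkeeping: one must check that $(X,\mu)$ is a genuine positive measure space and that $F$ and $G$ are $\mu$-measurable. Granting that the weight $\lambda$ is nonnegative and measurable on $\partial\Omega$, these verifications are routine, and Hölder's inequality needs no further structural hypothesis on the measure, so I do not anticipate any substantial difficulty. The content of the proposition is entirely the transcription of the one-line Hölder, or equivalently Young, estimate into the glued setting.
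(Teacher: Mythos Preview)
Your argument is correct: forming the disjoint union $X=\Omega\sqcup\partial\Omega$ with the measure $\mu$ that restricts to $dx$ on $\Omega$ and to $\lambda\,d\mathcal H^{N-1}$ on $\partial\Omega$, and then applying the classical H\"older inequality on $(X,\mu)$, yields the stated estimate verbatim. The alternative route via Young's inequality after normalization is equally valid and is in fact the standard proof of H\"older itself.

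There is, however, nothing to compare against in this paper: the proposition is not proved here but is merely \emph{recalled} from \cite[Proposition~A.1]{dos1}. So your proposal supplies a proof where the paper gives none, and the content is exactly what one would expect---a one-line transcription of the scalar H\"older inequality to the glued measure space.
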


Let $\Omega$ be a bounded Lipschitz domain, we recall that for any $\alpha>0$ there exists a positive constant $\lambda(\Omega,\alpha,N)$ such that, for any $u\in H^1(\Omega)$
\[
\int_\Omega u^2dx \le  \frac{1}{\lambda(\Omega,\alpha,N) }\left[ \int_\Omega |\nabla u|^2 dx+ \alpha \int_{\de\Omega} u^2 d \mathcal H^{N-1} 
\right].
\]
As well known, the optimal constant $\lambda(\Omega,\alpha, N)$ in the previous inequality is the first Robin eigenvalue of the Laplacian. Combining such inequality with the Sobolev inequality, it holds that
\begin{equation}
\label{sob_eigen}
 \|u\|_{L^{2^*}(\Omega)}^2    \le C (\Omega,\alpha,N) \left[
    \int_\Omega |\nabla u|^2 dx+ \alpha \int_{\de\Omega} |u|^2 d \mathcal H^{N-1}
    \right]
 \end{equation}
    where $2^*=\frac{2N}{N-2}$.

In the paper we will also make use of the following well-known trace inequality. If $\Omega$ is a bounded Lipschitz open set in $\R^N$, there exists a positive constant  $C>0$ such that when $1<p<N$
\begin{equation}
\label{traceembeddq}
\|u\|_{L^t(\partial \Omega)} \leq C\|u\|_{W^{1, p}(\Omega)} \text {  , for } t=\frac{p(N-1)}{N-p}
\end{equation}
with a compact embedding (see \cite[Chapter 2, Theorems 4.2 and 6.2]{necas}). 

Finally, if $u\in W^{1,p}(\Omega)$, and $\Omega$ is a bounded Lipschitz open set in $\R^N$, then the Poincar\'e-Wirtinger inequality holds:
\begin{equation*}
    \|u-u_\Omega\|_{L^p(\Omega)} \le C\|\nabla u\|_{L^p(\Omega)},
\end{equation*}
with $u_\Omega=\frac{1}{|\Omega|}\int_\Omega u\, dx$.

This obviously implies that
\begin{equation}
\label{poinc}
    \|u\|_{L^p(\Omega)} \le C\left(\|\nabla u\|_{L^p(\Omega)}+\|u\|_{L^1(\Omega)}\right).
\end{equation}

\section{Proof of the main results}
In this section, we prove the existence results under various hypotheses on the datum $f$. To this end, we will proceed by approximation, so we consider the following approximated problems 
\begin{equation}
\label{maingapprox}
\begin{cases}
-\dive(A_n(x,u_n)\nabla u_n)=f &\text{in }\Omega,\\[.2cm]
\ds\frac{\de u_n}{\de \nu}+\beta u_n=0 &\text{on }\de\Omega,
\end{cases}
\end{equation}
where
\[
A_n(x,t)=A(x,T_n(t)),
\]
 and 
 \[
 T_n(t)=
 \begin{cases}
     t&\text{if }|t|\le n,\\
     n\sign t &\text{if }|t|> n,
 \end{cases}
 \]
 denotes the usual truncation function at level $n>0$. We observe that the operator in \eqref{maingapprox} is now coercive, so the existence of bounded solutions is guaranteed by classical arguments. Then we get apriori estimates for this solutions, that together with the boundedness and the continuity of $A_n$ will allow us to pass to the limit in \eqref{maingapprox}, finding a solution of \eqref{maing}.

\subsection{Bounded solutions}
We start considering the case in which the datum $f$ has high summability.
\begin{lem}
\label{stimainfinito}
In the hypotheses of Theorem \ref{bounded}, if $f\in L^q(\Omega)$, $q>\frac{N}{2}$, and $u_n\in H^1(\Omega)\cap L^{\infty}(\Omega)$ is a weak solution of \eqref{maingapprox}, then
\begin{equation}
    \label{stima1}
     B(u_n(x)) \le K, \quad \quad \forall x\in \Omega
\end{equation}
where  $B$ is defined in \eqref{B} and  $K$ is a positive constant independent on $n$.
\end{lem}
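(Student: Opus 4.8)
The plan is to obtain the uniform $L^\infty$ bound on $u_n$ via a Stampacchia-type truncation argument, using as test function a suitable power of $(|u_n|-k)_+$ (or $G_k(u_n)=(u_n-k)_+$ together with its negative counterpart). First I would take $\varphi = G_k(u_n)$ in the weak formulation of \eqref{maingapprox}; since $A_n(x,u_n)\ge b(|u_n|)$ and $A_n$ is bounded, the left-hand side controls
\[
\int_{\{u_n>k\}} b(u_n)|\nabla u_n|^2\,dx + \beta\int_{\de\Omega} A_n(x,u_n)u_n G_k(u_n)\,d\mathcal H^{N-1}.
\]
The boundary term is nonnegative for $k\ge 0$ (same sign as $u_n$ on $\{u_n>k\}$), so it can be dropped from below; on the interior term I would like to use the chain rule to write $b(u_n)|\nabla u_n|^2 = |\nabla \Psi(u_n)|^2$ where $\Psi' = \sqrt{b}$. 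This, however, is not quite what pairs well with the datum. The cleaner route is to observe that $b(u_n)\nabla u_n = \nabla B(u_n)$, so the natural unknown is $v_n := B(u_n)$; condition \eqref{condizione_bordo}, $tb(t)\ge \Gamma B(t)$, is exactly what lets one compare the boundary integrand $A_n(x,u_n)u_n$ with $B(u_n)$ and thereby relate the boundary term to $\int_{\de\Omega} v_n (\cdots)$.

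Concretely, I would test with $\varphi = G_k(B(u_n))/b(\text{something})$—more precisely, since $\nabla B(u_n) = b(u_n)\nabla u_n$, testing \eqref{maingapprox} with $\varphi$ a function of $u_n$ chosen so that $A_n\nabla u_n\cdot\nabla\varphi \ge c\,|\nabla G_k(v_n)|^2$ with $v_n = B(u_n)$. Using $A_n\ge b(|u_n|)$ and $\nabla v_n = b(u_n)\nabla u_n$, the choice $\varphi = $ (a primitive making $\varphi'(u_n)b(u_n)^2 \ge b(u_n)\mathbf 1_{\{v_n>k\}}$) works, e.g. $\varphi$ with $\varphi'(t) = \mathbf 1_{\{B(t)>k\}}$, i.e. $\varphi = G_k^{B}(u_n):=(u_n - B^{-1}(k))_+$. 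Then the interior term is bounded below by $\int_{\{v_n>k\}}|\nabla v_n|^2/??$—I should instead simply test with $G_k(v_n)$ after checking $G_k(v_n)\in H^1(\Omega)$ (true since $B$ is Lipschitz on the range of the bounded $u_n$): one gets
\[
\int_\Omega A_n(x,u_n)\nabla u_n\cdot\nabla G_k(v_n)\,dx = \int_\Omega A_n(x,u_n)b(u_n)|\nabla G_k(v_n)|^2/b(u_n)\cdots
\]
Let me phrase the robust version: test with $G_k(v_n)$; since $\nabla G_k(v_n) = b(u_n)\nabla u_n\,\mathbf 1_{\{v_n>k\}}$ and $A_n\ge b(u_n)>0$, one has $A_n\nabla u_n\cdot\nabla G_k(v_n) = A_n b(u_n)|\nabla u_n|^2\mathbf 1_{\{v_n>k\}} \ge |\nabla v_n|^2\mathbf 1_{\{v_n>k\}} = |\nabla G_k(v_n)|^2$. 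For the boundary term, on $\{v_n>k\}$ one has $u_n>B^{-1}(k)>0$ and by \eqref{condizione_bordo} $A_n(x,u_n)u_n \ge b(u_n)u_n \ge \Gamma B(u_n) = \Gamma v_n \ge 0$, so $\beta\int_{\de\Omega}A_n u_n G_k(v_n)\ge \Gamma\beta\int_{\de\Omega} v_n G_k(v_n)\ge \Gamma\beta\int_{\de\Omega}|G_k(v_n)|^2$ (using $v_n\ge G_k(v_n)$ on that set). Combining, and applying \eqref{sob_eigen} to $w := G_k(v_n)$ with $\alpha$ chosen appropriately,
\[
\|G_k(v_n)\|_{L^{2^*}(\Omega)}^2 \le C\int_\Omega |f|\,G_k(v_n)\,dx \le C\|f\|_{L^q(\Omega)}\,\|G_k(v_n)\|_{L^{2^*}(\Omega)}\,|A_k|^{1-\frac1q-\frac1{2^*}},
\]
where $A_k = \{x : v_n(x)>k\}$ and the last factor uses Hölder with $q>N/2$ so that $1-\frac1q-\frac1{2^*}>0$.

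Dividing by $\|G_k(v_n)\|_{L^{2^*}}$ and then using $\|G_k(v_n)\|_{L^1(A_k)}\le |A_k|^{1-1/2^*}\|G_k(v_n)\|_{L^{2^*}}$ together with the elementary inequality $(h-k)|A_h|^{1/2^*}\le\int_{A_k}G_k(v_n)$ for $h>k$, one arrives at the Stampacchia iteration inequality
\[
|A_h| \le \frac{C}{(h-k)^{2^*}}\,|A_k|^{2^*(1-\frac1q-\frac1{2^*})} = \frac{C}{(h-k)^{2^*}}\,|A_k|^{\gamma},\qquad \gamma = 2^*\Bigl(1-\tfrac1q\Bigr)-1>1,
\]
where $\gamma>1$ is precisely equivalent to $q>N/2$. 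By the classical Stampacchia lemma (\cite{2} or standard references), $|A_k|=0$ for $k\ge k_0$ with $k_0$ depending only on $\|f\|_{L^q}$, $|\Omega|$, $\Omega$, $N$, $q$, $\beta$, $\Gamma$ — not on $n$. Hence $v_n = B(u_n)\le k_0 =: K$ a.e., which is \eqref{stima1}. The negative part is handled identically by testing with $G_k(\tilde v_n)$, $\tilde v_n := B(|u_n|)$ restricted to $\{u_n<0\}$, or by symmetry replacing $u_n$ with $-u_n$ (note $b$ depends on $|u_n|$ and $A_n(x,u_n)u_n$ has the sign of $u_n$, so the boundary term is again favorable).

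The main obstacle I anticipate is the boundary term: without \eqref{condizione_bordo} there is no reason the trace contribution on $\{v_n>k\}$ controls $\int_{\de\Omega}|G_k(v_n)|^2$, and indeed the remark after Theorem \ref{bounded} says this condition is genuinely needed. So the crux is verifying carefully that $A_n(x,u_n)u_n\ge \Gamma B(u_n)$ on the relevant superlevel set (which follows from $A_n\ge b(|u_n|)$, $u_n>0$ there, and \eqref{condizione_bordo}) and that the resulting lower bound $\Gamma\beta\int_{\de\Omega}v_n G_k(v_n)$ dominates a multiple of $\int_{\de\Omega}|G_k(v_n)|^2$, which is immediate since $v_n\ge G_k(v_n)\ge 0$ pointwise on $\de\Omega\cap\{v_n>k\}$. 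A secondary technical point is the admissibility of the test functions: $G_k(v_n)\in H^1(\Omega)$ because $u_n\in L^\infty$ makes $B$ Lipschitz on its range, so $v_n\in H^1$ and truncation preserves this; all integrals are finite, justifying every manipulation above.
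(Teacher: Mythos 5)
Your proposal is correct and follows essentially the same route as the paper: after the exploratory remarks you settle on testing with $G_k(B(u_n))$, bound the interior term via \eqref{condA}, use \eqref{condizione_bordo} to turn the boundary term into $\Gamma\beta\int_{\de\Omega}G_k(B(u_n))^2$, apply \eqref{sob_eigen} and H\"older with $q>\frac N2$, and conclude by Stampacchia's lemma, exactly as in the paper's proof (which simply treats both signs at once by taking the odd truncation $G_k(t)=(|t|-k)^+\sign t$ instead of handling the negative part by symmetry).
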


\begin{proof}
Let $u_n$ be a weak solution of \eqref{maingapprox}, we consider $G_k(t)=(|t|-k)^+\sign t=t-T_k(t)$, for $k>0$ and we take $\varphi_k=G_k(B(u_n))$ as test function in \eqref{maingapprox}. Denoting $A_k=\{|u_n|>k\}$, by  \eqref{condA} it holds
\[
\int_{A_k} \left| \nabla G_k (B(u_n)) \right|^2 dx +\beta \int_{\de A_k} u_n b(u_n) G_k(B(u_n)) d\mathcal{H}^{N-1} \le \int_{A_k} f G_k(B(u_n))dx.
\] 
We explicitly observe that $\varphi_k$ vanishes also on $\de A_k\cap\Omega$.
Moreover, by \eqref{condizione_bordo} for some $\Gamma>0$, we get
\[
u_n b(u_n) G_k(B(u_n)) \ge \Gamma B(u_n) G_k(B(u_n)) \ge \Gamma  G_k(B(u_n))^2,
\]
and then, by using the H\"older inequality, 
\begin{multline*}
\int_{A_k} \left| \nabla G_k (B(u_n)) \right|^2 dx +\beta \Gamma \int_{\de A_k} G_k(B(u_n))^2 d\mathcal{H}^{N-1} \le \\ \le  \|f\|_{L^{\frac{2N}{N+2}}(A_{k})} \left( \int_{A_k} G_k(B(u_n))^{2^*}dx\right)^{\frac{1}{{2^{*}}}}.
\end{multline*}
Now, using the  inequality
\eqref{sob_eigen}, it holds
\[
\|G_k(B(u_n))\|_{L^{2^{*}}(A_{k})} \le C \|f\|_{L^{\frac{2N}{N+2}}(A_{k})}.
\]
Then by the H\"older inequality it holds that
\[
\|G_k(B(u_n))\|_{L^{2^{*}}(A_{k})} \le C \|f\|_{L^{q}(\Omega)} |A_{k}|^{\left(1-\frac{2N}{q(N+2)}\right)\frac{N+2}{2N}}.
\]
Hence, if $h>k>0$ then $G_{k}(B(u_{n}))\geq h-k$ on $A_{h}$, and finally, being also $q>\frac{N}{2}$, it holds that
\[
|A_{h}| \le \frac{C}{(h-k)^{2^{*}}} |A_{k}|^{1+\eps}, 
\]
for some $\eps>0$. The thesis follows by applying the classical Stampacchia Lemma (see \cite[Lemma 4.1]{st}).
\end{proof}

\paragraph{Proof of Theorem \ref{bounded}}
\begin{proof}
Let $u_{n}\in H^{1}(\Omega)$ be a weak solution of the approximating problem \eqref{maingapprox}. Then, being $B$ increasing and unbounded, by \eqref{stima1}
\[
u_{n}(x) \le B^{-1}(K), \quad \forall x\in \Omega, \forall n\in\N.
\]
Hence, trivially, for $n$ sufficiently large as $n>B^{-1}(K)$, it holds that $A_n(x,u_n)=A(x,u_n)$  and than $u_n$ is a bounded solution of  problem \eqref{maing}.
\end{proof}

\subsection{Unbounded solutions}
\begin{lem}
\label{stimaLq}
Under the hypotheses of Theorem \ref{unbouded}, if $u_n\in H^1(\Omega)$ is a weak solution of problem
\begin{equation}
\label{maingapprox_2}
\begin{cases}
-\dive(A(x,u_n)\nabla u_n)=f_n &\text{in }\Omega,\\[.2cm]
\ds\frac{\de u_n}{\de \nu}+\beta u_n=0 &\text{on }\de\Omega
\end{cases}
\end{equation}
 with $f_n=T_n(f)$, then
\begin{equation*}
        \label{stimaLq}
    \|u_n\|_{L^{q^{**}(1-\vartheta)}(\Omega)} \le K_1,
\end{equation*}
with $q$ as in \eqref{condizioneq}, and 
\[
\|u_n\|_{H^1(\Omega)} \le K_2,
\]
where $K_1,K_2$ are  positive constants independent on $n$, which depend only by $\vartheta$, $q$, $N$, $|\Omega|$, $\|f\|_{L^q(\Omega)}$.
\end{lem}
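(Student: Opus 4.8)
The plan is to obtain the two a priori bounds by testing \eqref{maingapprox_2} with a suitable power-type function of $u_n$ and exploiting the structure $b(s)=(1+s)^{-\vartheta}$. First I would assume without loss of generality that $u_n\ge 0$ (the general case follows by working with $|u_n|$ and symmetry, since $A\ge b(|u_n|)$). The natural choice of test function is $\varphi=(1+u_n)^{\gamma}-1$ for an exponent $\gamma\ge 1$ to be fixed, which belongs to $H^1(\Omega)\cap L^\infty$ when $u_n$ is bounded, hence is admissible for the approximating problem (whose solutions are bounded since the operator is coercive). Plugging it in, the bulk term on the left becomes
\[
\int_\Omega A(x,u_n)\nabla u_n\cdot\nabla\big((1+u_n)^\gamma-1\big)\,dx = \gamma\int_\Omega A(x,u_n)(1+u_n)^{\gamma-1}|\nabla u_n|^2\,dx \ge \gamma\int_\Omega (1+u_n)^{\gamma-1-\vartheta}|\nabla u_n|^2\,dx,
\]
using \eqref{condA} and \eqref{b(u)}; the boundary term $\beta\int_{\de\Omega}A(x,u_n)u_n\big((1+u_n)^\gamma-1\big)\,d\mathcal H^{N-1}$ is nonnegative. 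Writing $w_n=(1+u_n)^{\frac{\gamma+1-\vartheta}{2}}$, the left-hand side controls $c\int_\Omega|\nabla w_n|^2\,dx$ up to the boundary contribution, and I would add $\beta\int_{\de\Omega}|w_n|^2 d\mathcal H^{N-1}$ on both sides (the difference between this and the genuine boundary term is an $L^1$-type remainder that is easily absorbed, since $u_n((1+u_n)^\gamma-1)\ge c(1+u_n)^{\gamma+1}-C$). Applying the Sobolev–Robin inequality \eqref{sob_eigen} then yields $\|w_n\|_{L^{2^*}(\Omega)}^2\le C\big(\int_\Omega|f_n|(1+u_n)^\gamma\,dx + 1\big)$, i.e.
\[
\Big(\int_\Omega (1+u_n)^{\frac{2^*(\gamma+1-\vartheta)}{2}}\,dx\Big)^{\frac{2}{2^*}} \le C\int_\Omega |f|(1+u_n)^\gamma\,dx + C.
\]

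Next I would estimate the right-hand side by Hölder with exponents $q$ and $q'$: $\int_\Omega|f|(1+u_n)^\gamma\le\|f\|_{L^q}\big(\int_\Omega(1+u_n)^{\gamma q'}\big)^{1/q'}$. The exponent $\gamma$ is then chosen so that the power $\frac{2^*(\gamma+1-\vartheta)}{2}$ appearing on the left equals $\gamma q'$ on the right; solving this linear equation gives precisely $\gamma q' = q^{**}(1-\vartheta)$ — this is where the threshold in \eqref{condizioneq} and the exponent $q^{**}=\frac{qN}{N-2q}$ come from, and one checks $\gamma\ge 1$ exactly when $q\ge\frac{2N}{N+2-\vartheta(N-2)}$, which is the lower bound in \eqref{condizioneq}; the upper bound $q<N/2$ guarantees $q^{**}$ is finite and positive. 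With this choice, setting $Y=\int_\Omega(1+u_n)^{q^{**}(1-\vartheta)}\,dx$, the inequality reads $Y^{2/2^*}\le C Y^{1/q'}+C$, and since a direct computation shows $\frac{1}{q'}<\frac{2}{2^*}$ in the admissible range, this self-improving inequality forces $Y\le K_1$, giving the first bound $\|u_n\|_{L^{q^{**}(1-\vartheta)}(\Omega)}\le K_1$ with $K_1$ depending only on $\vartheta,q,N,|\Omega|,\|f\|_{L^q(\Omega)}$.

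For the $H^1$ bound I would go back to the energy estimate obtained from the same test function (with the now-established control on $\int(1+u_n)^\gamma$): from the displayed inequality $\gamma\int_\Omega(1+u_n)^{\gamma-1-\vartheta}|\nabla u_n|^2\le C$, take $\gamma=1$ so that $\gamma-1-\vartheta=-\vartheta$, giving $\int_\Omega(1+u_n)^{-\vartheta}|\nabla u_n|^2\,dx\le C$ once one verifies $\gamma=1$ is still below the summability threshold — alternatively test directly with $u_n$ (admissible, $u_n$ bounded) to get $\int_\Omega(1+u_n)^{-\vartheta}|\nabla u_n|^2\,dx + \beta\int_{\de\Omega}b(u_n)u_n^2\le\int_\Omega|f_n|u_n$, whose right side is bounded by the already-proven $L^{q^{**}(1-\vartheta)}$ estimate together with Hölder provided $q'\le q^{**}(1-\vartheta)$, which again holds in the range \eqref{condizioneq}. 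To convert $\int(1+u_n)^{-\vartheta}|\nabla u_n|^2\le C$ into a genuine gradient bound, split $\int_\Omega|\nabla u_n|^2 = \int_\Omega(1+u_n)^{\vartheta}(1+u_n)^{-\vartheta}|\nabla u_n|^2$ and apply Hölder with exponents $\big(\frac{q^{**}(1-\vartheta)}{\vartheta}\big)$-type pairing against the known $L^{q^{**}(1-\vartheta)}$ bound on $(1+u_n)$; a short check that $\vartheta\le q^{**}(1-\vartheta)$ closes this. Together with the $L^1$ control on $u_n$ and Poincaré \eqref{poinc} (or \eqref{sob_eigen} directly, since we also control the boundary term), this yields $\|u_n\|_{H^1(\Omega)}\le K_2$.

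The main obstacle is the bookkeeping of exponents: one has to verify simultaneously that the chosen $\gamma$ is $\ge 1$ (so the test function and the various interpolations are legitimate), that the self-improving exponent inequality $\frac1{q'}<\frac2{2^*}$ holds, and that the final Hölder pairings for the $H^1$ estimate are compatible — all of these turn out to be equivalent to the two-sided restriction on $q$ in \eqref{condizioneq}, but pinning down each equivalence requires care. The analytic content (testing, Sobolev–Robin, Stampacchia-free self-improvement) is otherwise standard; the boundary term never causes trouble because it always enters with a favorable sign on the left.
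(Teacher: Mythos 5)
Your first estimate is essentially the paper's own argument: the same test function $[(1+|u_n|)^{p}-1]\sign u_n$ (your $\gamma$ is the paper's $p$), the same choice $p q'=(p+1-\vartheta)\frac{2^*}{2}$, the Sobolev--Robin inequality \eqref{sob_eigen}, and a self-improving/division step giving $\|u_n\|_{L^{q^{**}(1-\vartheta)}}\le K_1$; your handling of the boundary term (adding $\beta\int_{\de\Omega}w_n^2$ and absorbing an additive constant) is a harmless variant of the paper's pointwise inequality \eqref{diseg}. One bookkeeping slip already appears here: $\gamma\ge 1$ is \emph{not} equivalent to the lower bound in \eqref{condizioneq}; it is equivalent to the weaker bound $q\ge\frac{2N-N\vartheta}{N+2-N\vartheta}$ of \eqref{condizioneq2}. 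The lower bound in \eqref{condizioneq} is equivalent to the stronger condition $\gamma\ge 1+\vartheta$, and this is exactly the point your $H^1$ step misses.

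The $H^1$ part of your proposal has a genuine gap. Testing with $u_n$ (or taking $\gamma=1$) only gives the weighted estimate $\int_\Omega(1+|u_n|)^{-\vartheta}|\nabla u_n|^2\,dx\le C$, and your proposed ``conversion'' via the splitting $\int_\Omega|\nabla u_n|^2=\int_\Omega(1+|u_n|)^{\vartheta}\,(1+|u_n|)^{-\vartheta}|\nabla u_n|^2$ does not work: H\"older with any finite exponent $r$ on the factor $(1+|u_n|)^{\vartheta}$ puts the conjugate exponent $r'>1$ on the other factor, producing $\int\bigl[(1+|u_n|)^{-\vartheta}|\nabla u_n|^{2}\bigr]^{r'}$, i.e.\ $|\nabla u_n|^{2r'}$, which is not controlled by the weighted estimate; the only admissible pairing would need $(1+|u_n|)^{\vartheta}\in L^\infty$ uniformly, which is unavailable. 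Indeed, interpolating the weighted gradient bound against an $L^m$ bound on $u_n$ can only yield $\nabla u_n\in L^{r}$ for $r<2$ — this is precisely the content of Lemma \ref{stimaLs} and Theorem \ref{special}, not an $H^1$ bound. The correct route (and the paper's) is already contained in your first estimate: with the same exponent $p$ fixed by $pq'=(p+1-\vartheta)\frac{2^*}{2}$, the test-function inequality also gives $\int_\Omega(1+|u_n|)^{p-1-\vartheta}|\nabla u_n|^2\,dx\le C$, and since the lower bound in \eqref{condizioneq} is equivalent to $p\ge 1+\vartheta$, the weight is $\ge 1$ and this \emph{is} the $L^2$ gradient bound; combined with the $L^{q^{**}(1-\vartheta)}$ (hence $L^1$/$L^2$) control and \eqref{poinc}, or with the boundary estimate $\|u_n\|_{L^{p+1-\vartheta}(\de\Omega)}\le C$ coming from the same inequality, one gets $\|u_n\|_{H^1(\Omega)}\le K_2$. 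So the missing idea is not a new estimate but the recognition that \eqref{condizioneq} forces $p-1-\vartheta\ge 0$, making the weighted gradient term coercive in $L^2$ by itself.
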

\begin{proof}
As in Lemma \ref{stimainfinito}, there exists a weak solution $u_n\in H^1(\Omega)\cap L^\infty(\Omega)$  of \eqref{maingapprox_2}. We consider $\varphi_n=[(1+|u_n|)^p-1]\sign u_n$, for a suitable $p>1$, as test function in \eqref{maingapprox_2}. Using \eqref{condA}, it follows that
\begin{multline}
\label{disuguaglianza2}
C\int_{\Omega} \left| \nabla (1+|u_n|)^{\frac{p+1-\vartheta}{2}} \right|^2 dx +\beta \int_{\de \Omega} |u_n| (1+|u_n|)^{-\vartheta}((1+|u_n|)^p-1) d\mathcal{H}^{N-1} \\ \le \int_{\Omega} |f_n| ((1+|u_n|)^p-1) dx,
\end{multline}
with $C=p\left(\frac{2}{p+1-\vartheta}\right)^2$. Now, we observe that
\begin{equation}
\label{diseg}
|u_n|(1+|u_n|)^{-\vartheta}((1+|u_n|)^p-1)\ge \frac{1}{p+1}\left((1+|u_n|)^{\frac{p+1-\vartheta}{2}}-1\right)^2.    
\end{equation}
Indeed the function
\[
h(t)= (t-1)t^{-\vartheta}(t^p-1)-\frac{1}{p+1} \left(t^{\frac{p+1-\vartheta}{2}}-1\right)^2
\]
is such that $h(1)=0$ and increasing as $t\ge 1$ since
\[
h'(t)= t^{\vartheta-1}[pt^{p-\vartheta}(t-1)+g(t)]
\]
 with 
 \[
 g(t)=\frac{p+1+\vartheta}{p+1} t^{\frac{p+1-\vartheta}{2}}-t^{1-\vartheta}-\frac{\vartheta}{p+1}
 \]
 such that $g(1)=0$ and $g'(1)\geq 0$.

Using inequality \eqref{diseg} in \eqref{disuguaglianza2}, we get
\begin{multline} \label{intero}
C\int_{\Omega} \left| \nabla (1+|u_n|)^{\frac{p+1-\vartheta}{2}} -1\right|^2 dx +\beta \int_{\de \Omega}  ((1+|u_n|)^{\frac{p+1-\vartheta}{2}} -1)^2 d\mathcal{H}^{N-1} \\ \le \int_{\Omega} |f_n| ((1+|u_n|)^p-1) dx,
\end{multline}

by the inequality \eqref{sob_eigen} we get
\[
\|u_n\|^{p+1-\vartheta}_{L^{(p+1-\vartheta)\frac{2^*}{2}}(\Omega)} \le C  \int_{\Omega} |f_n| |u_n|^p dx.
\]
 We choose $p$ such that $pq'=(p+1-\vartheta)\frac{2^*}{2}$ that means $p=\frac{(1-\vartheta)N(q-1)}{N-2q}$, by the H\"older inequality we have
\begin{equation}
\label{Lqstima}
\|u_n\|^{1-\vartheta}_{L^{(p+1-\vartheta)\frac{2^*}{2}}(\Omega)} \le C \|f_n\|_{L^{q}(\Omega)},    
\end{equation}
and
\begin{equation}
\label{gradstima}
\int_\Omega\left|\nabla u_n\right|^2\left(1+\left|u_n\right|\right)^{p-1-\vartheta} \le C.    
\end{equation}
Now, we notice that $(p+1-\vartheta)\frac{2^*}{2}=q^{**}(1-\vartheta)$ and condition \eqref{condizioneq} is equivalent to require that $p\ge 1+\vartheta$. Then \eqref{gradstima} gives a bound of the gradient of $u_n$ in $L^2$.
Moreover, \eqref{intero} together with \eqref{Lqstima}, it implies that
\[
\|u_n\|_{L^{p+1-\vartheta}(\de\Omega)}\leq C
\]
with $p+1-\vartheta>2$. Finally, we have
\[
\|u_n\|_{H^1(\Omega)} \le C.
\]
\end{proof}

\paragraph{Proof of Theorem \ref{unbouded}}
\begin{proof}
Let $u_{n}\in H^{1}(\Omega)$ be a weak solution of the approximating problem \eqref{maingapprox_2} with $f=f_n$. By Lemma \ref{stimaLq} there exists a subsequence still denoted by $u_n$ such that 
\[
u_n \rightarrow u \text{ in } L^2(\Omega),
\]
\[
u_n \rightharpoonup u \text{ in } H^1(\Omega),
\]
\[
u_n \rightarrow u \text{ a.e. in } \Omega.
\]

By the compact trace embedding \eqref{traceembeddq} with $p=2$, we get that $u_n$ strongly converges in $L^2(\de\Omega)$. Being $A$ bounded and continuous, this allows to pass to the limit and conclude that $u$ is a weak solution of \eqref{maing}.
\end{proof}

\begin{lem}
 \label{stimaLs}
In the hypotheses of Theorem \ref{special}, if $u_n\in H^1(\Omega)$ is  a weak solution of \eqref{maingapprox_2} and $f\in L^q(\Omega)$, with $q$ as in \eqref{condizioneq2}, then
\begin{equation*}
        \label{stimagradd}
        \| u_{n}\|_{W^{1,s}(\Omega)}\leq K
\end{equation*}
with $s=\frac{2N-N\vartheta}{N-\vartheta}$
where $K$ is a positive constant independent on $n$, which depends only by $\vartheta,q,N,|\Omega|$ and $\|f\|_{L^q(\Omega)}$.
 \end{lem}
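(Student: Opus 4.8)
The plan is to follow the same approximation-and-testing scheme used in Lemma~\ref{stimaLq}, but with a test function tuned to the lower summability range \eqref{condizioneq2}, where the natural gradient estimate lives in $L^{s}$ with $s<2$ rather than in $L^{2}$. Concretely, I would take $\varphi_n=[(1+|u_n|)^{p}-1]\sign u_n$ as test function in \eqref{maingapprox_2} for a suitable exponent $p$ (this time $p<1$, so that $\varphi_n$ is still an admissible $H^1\cap L^\infty$ test function but produces a sublinear power), and exploit \eqref{condA} exactly as before to obtain
\[
C\int_\Omega\left|\nabla(1+|u_n|)^{\frac{p+1-\vartheta}{2}}\right|^2dx+\beta\int_{\de\Omega}|u_n|(1+|u_n|)^{-\vartheta}\big((1+|u_n|)^p-1\big)\,d\mathcal H^{N-1}\le\int_\Omega|f_n|\big((1+|u_n|)^p-1\big)\,dx.
\]
The boundary term is nonnegative (same elementary inequality \eqref{diseg}, whose proof only used $p>0$ and $\vartheta\le1$), so it may simply be discarded, and by \eqref{sob_eigen} the left side controls $\|(1+|u_n|)^{\frac{p+1-\vartheta}{2}}-1\|_{L^{2^*}(\Omega)}^2$, i.e.\ a bound on $u_n$ in $L^{(p+1-\vartheta)2^*/2}$.

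Next I would run the standard interpolation/H\"older argument to close the estimate. The right-hand side is bounded by $\|f_n\|_{L^q}\,\|(1+|u_n|)^p\|_{L^{q'}}$, and the self-improving choice is $pq'=(p+1-\vartheta)\tfrac{2^*}{2}$, which fixes $p$ as a function of $q,\vartheta,N$; the lower endpoint in \eqref{condizioneq2} is precisely the value of $q$ that makes the resulting $p$ equal to the borderline exponent (here $p=1-\vartheta$, equivalently the Lebesgue bound on $u_n$ becomes the exponent appearing in $W^{1,s}$), while the upper endpoint is $q=\tfrac{2N}{N+2-\vartheta(N-2)}$, matching Theorem~\ref{unbouded}. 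From the coercive left-hand side I extract, as in \eqref{gradstima}, a bound of the form $\int_\Omega|\nabla u_n|^2(1+|u_n|)^{p-1-\vartheta}\,dx\le C$. Since now $p-1-\vartheta<0$, this is a \emph{weighted} gradient bound with a weight that degenerates where $u_n$ is large; to convert it into an unweighted $L^s$ bound I would use H\"older with exponents $\tfrac{2}{s}$ and its conjugate:
\[
\int_\Omega|\nabla u_n|^s\,dx=\int_\Omega|\nabla u_n|^s(1+|u_n|)^{\frac{(p-1-\vartheta)s}{2}}(1+|u_n|)^{-\frac{(p-1-\vartheta)s}{2}}\,dx\le\Big(\int_\Omega|\nabla u_n|^2(1+|u_n|)^{p-1-\vartheta}\Big)^{\frac{s}{2}}\Big(\int_\Omega(1+|u_n|)^{\frac{(1+\vartheta-p)s}{2-s}}\Big)^{1-\frac{s}{2}}.
\]
The first factor is bounded by the weighted estimate; the second factor is finite provided the exponent $\tfrac{(1+\vartheta-p)s}{2-s}$ does not exceed the Lebesgue exponent $(p+1-\vartheta)\tfrac{2^*}{2}=q^{**}(1-\vartheta)$ already controlled, and a direct computation shows that the choice $s=\tfrac{2N-N\vartheta}{N-\vartheta}$ is exactly the largest $s<2$ for which this matching holds. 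Combining the $L^s$ gradient bound with the $L^{s}$ (indeed $L^{q^{**}(1-\vartheta)}\hookrightarrow L^{s}$) bound on $u_n$ itself and the Poincar\'e-type inequality \eqref{poinc} then yields $\|u_n\|_{W^{1,s}(\Omega)}\le K$.

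The main obstacle I anticipate is bookkeeping the exponents so that everything is simultaneously consistent: one has to check that the chosen $p$ is $>0$ (so $\varphi_n$ is admissible) and lies in the range for which \eqref{condizione_bordo}-type positivity of the boundary term still holds, that $p-1-\vartheta<0$ (so the weight genuinely degenerates and one cannot avoid passing through $W^{1,s}$ with $s<2$), and that the two H\"older steps — the one on the right-hand side producing the self-improving identity and the one converting the weighted gradient bound to an unweighted $L^s$ bound — are compatible with the \emph{same} Lebesgue exponent $q^{**}(1-\vartheta)$ for $u_n$; the value $s=\tfrac{2N-N\vartheta}{N-\vartheta}$ and the endpoints in \eqref{condizioneq2} are forced precisely by requiring all these inequalities to be tight at the lower endpoint. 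A secondary point worth stating carefully is that, unlike in Lemma~\ref{stimaLq}, one does not get an $L^2$ gradient bound, so when passing to the limit (in the proof of Theorem~\ref{special}) the solution is obtained only in the $W^{1,s}$ sense of \eqref{streuza}, with test functions in $W^{1,s'}(\Omega)$; the trace term makes sense because $p+1-\vartheta>s$ gives, via \eqref{traceembeddq}, enough summability of $u_n$ on $\de\Omega$ to pass to the limit against $W^{1,s'}$ traces, but I would flag that this compactness on the boundary is the one place where the degenerate regime is genuinely more delicate than the $H^1$ case.
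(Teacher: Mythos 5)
Your proposal is essentially correct, but it follows a genuinely different route from the paper. The paper does not use power-type test functions in this range at all: it tests \eqref{maingapprox_2} directly with $u_n$, obtaining the weighted energy bound $\int_\Omega|\nabla u_n|^2(1+|T_n(u_n)|)^{-\vartheta}dx\le\int_\Omega|f_n||u_n|dx$ (the boundary term is discarded by sign), and converts this into a bound on $\|u_n\|_{W^{1,s}}^s$ in one stroke via the Poincar\'e inequality \eqref{poinc} and the weighted H\"older inequality of Proposition \ref{holder}; the resulting right-hand side is then controlled through the Sobolev embedding $W^{1,s}\subset L^{s^*}$, where $s$ is chosen exactly so that $s^*=\frac{\vartheta s}{2-s}$, and the estimate closes by absorption because every power of $\|u_n\|_{W^{1,s}}$ on the right is strictly below $s$ (this is where $q\ge (s^*)'=\frac{2N-N\vartheta}{N+2-N\vartheta}$ enters). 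Your scheme instead repeats Lemma \ref{stimaLq} with the matched exponent $p$ and then passes from the weighted gradient bound \eqref{gradstima} to $L^s$ by H\"older; this is the classical alternative from the Dirichlet literature, and it buys a bit more, namely the explicit Lebesgue estimate $\|u_n\|_{L^{q^{**}(1-\vartheta)}}\le C$ with a constant depending only on the data, while the paper's absorption argument is shorter and covers the whole range \eqref{condizioneq2} with the single test function $u_n$.

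One bookkeeping error should be corrected, although it does not break the argument: with $pq'=(p+1-\vartheta)\frac{2^*}{2}$, i.e. $p=\frac{(1-\vartheta)N(q-1)}{N-2q}$, the range \eqref{condizioneq2} corresponds to $p\in[1,1+\vartheta)$, not to $p<1$. At the lower endpoint $q=\frac{2N-N\vartheta}{N+2-N\vartheta}$ one finds $p=1$ (so your test function degenerates precisely to $u_n$, the paper's choice), not $p=1-\vartheta$, and the Lebesgue exponent controlled there is $q^{**}(1-\vartheta)=s^*=\frac{N(2-\vartheta)}{N-2}$, not the exponent $s$ of $W^{1,s}$. All the inequalities you actually need survive this correction: the test function is admissible; \eqref{diseg} holds for $p\ge1$ (your parenthetical claim that its proof only uses $p>0$ is not what the paper's computation of $g'(1)$ gives, but the point is moot here); $p-1-\vartheta<0$ throughout the range, so the weight does degenerate; and the key comparison $\frac{(1+\vartheta-p)s}{2-s}\le q^{**}(1-\vartheta)$ holds for all $q$ in \eqref{condizioneq2}, with equality exactly at the lower endpoint, which is indeed what forces $s=\frac{2N-N\vartheta}{N-\vartheta}$. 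Finally, since $q^{**}(1-\vartheta)>s$ and $\Omega$ is bounded, the $L^s$ bound on $u_n$ itself is immediate, so the appeal to \eqref{poinc} is not even needed in your version.
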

\begin{proof}
Let $u_n$ be a solution of \eqref{maingapprox_2}. Using the Poincar\'e-Wirtinger inequality \eqref{poinc}, the H\"older inequality \eqref{holder}, and using $u_{n}$ as test function in \eqref{maingapprox_2}, we obtain
\begin{multline} \label{1}
\|u_n\|_{W^{1,s}}^s \le C
\int_{\Omega}(|\nabla u_{n}|^{s} +|u_{n}|)dx \le \\
C2^{\frac{2-s}{s}}\left(
\int_{\Omega} \frac{|\nabla u_{n}|^{2}}{(1+|T_n(u_{n})|)^{\vartheta}}dx+\int_{\Omega} \frac{| u_{n}|^{\frac2s}} {(1+|T_n(u_{n})|)^{\vartheta}}dx\right)^{\frac{s}{2}} \times \\  \qquad\qquad \times
\left(\int_{\Omega} (1+|T_n(u_{n})|)^{\frac{\vartheta	s}{2-s}}dx
\right)^{\frac{2-s}{s}} \le
\\
\le 
C2^{\frac{2-s}{s}}\left(
\int_{\Omega}|f_{n}||u_{n}|dx -\int_{\de\Omega} A(x,u_n)u_n^2 d\mathcal H^{N-1} +\int_{\Omega} \frac{| u_{n}|^{\frac2s}}{(1+|T_n(u_{n})|)^{\vartheta}}dx\right)^{\frac{s}{2}} \times \\ \qquad\qquad \times
\left(\int_{\Omega} (1+|T_n(u_{n})|)^{\frac{\vartheta	s}{2-s}}dx
\right)^{\frac{2-s}{s}}\le \\
\le
C2^{\frac{2-s}{s}}\left(
\int_{\Omega}|f_{n}||u_{n}|dx +\int_{\Omega} {(1+|u_{n}|)^{\frac2s-\vartheta}}dx\right)^{\frac{s}{2}}
\left(\int_{\Omega} (1+|u_{n}|)^{\frac{\vartheta	s}{2-s}}dx
\right)^{\frac{2-s}{s}},
 \end{multline}
 since $T_n(s)\leq s$, for $s>0$. 
Now, observing that the Sobolev conjugate is $s^{*}=\frac{\vartheta s}{2-s}$, and that $2-\vartheta<s$, by H\"older and Sobolev inequalities we get
\begin{multline} \label{2}
\left(
\int_{\Omega}|f_{n}||u_{n}|dx +\int_{\Omega} {(1+|u_{n}|)^{\frac2s-\vartheta}}dx\right)^{\frac{s}{2}}
\left(\int_{\Omega} (1+|u_{n}|)^{\frac{\vartheta	s}{2-s}}dx
\right)^{\frac{2-s}{2}} \le  \\ \le
\left(
\|f\|_{L^{q}}\|u_{n}\|_{L^{s^{*}}}+\left(\int_{\Omega} (1+|u_{n}|)^{s^{*}}dx\right)^{\frac{2-\vartheta s}{s^{*}s}}
\right)^{\frac{s}{2}}\left(\int_{\Omega}(1+|u_{n}|)^{s^{*}}dx\right)^{\frac{s\vartheta}{2s^{*}}} \le \\ \le
C\left(
\|f\|_{L^{q}}\|u_{n}\|_{W^{1,s}}+ \|u_{n}\|_{W^{1,s}}^{\frac2s-\vartheta}
\right)^{\frac{s}{2}}\|u_{n}\|_{W^{1,s}}^{\frac{s\vartheta}{2}}.
\end{multline}
combining \eqref{1} and \eqref{2}, we get the thesis. 
 
\end{proof}
\paragraph{Proof of Theorem \ref{special}}
\begin{proof}
Let $u_{n}$ be solution of the approximating problem \eqref{maingapprox_2} with $f=f_n$. By Lemma \ref{stimaLs} there exists a subsequence still denoted by $u_n$ such that 
\[
u_n \rightarrow u \text{ in } L^s(\Omega),
\]
\[
u_n \rightharpoonup u \text{ in } W^{1,s}(\Omega),
\]
\[
u_n \rightarrow u \text{ a.e. in } \Omega,
\]
with $s=\frac{2N-N\vartheta}{N-\vartheta}$. By the trace embedding \eqref{traceembeddq}, we get that $u_n$, and then $v_n=A(x,T_n(u_n))u_n$, are bounded in $L^r(\de\Omega)$ with  $r= \frac{s(N-1)}{N-s}>s$. In conclusion, we have strong convergence in
$L^s(\de \Omega)$.
This allows to pass to the limit and to get that $u$ is a solution of \eqref{intro} in the sense of \eqref{streuza}.
\end{proof}

\section{An example}
\label{esempiomain}
Let us consider problem \eqref{intro} defined in $\Omega=B_R$, the ball centered at the origin with radius $R$ and with
\[
b(t)= \frac{1}{(1+t)^\vartheta},
\]
where $0<\vartheta\le 1$, and let us take into account a datum $f=\frac{A}{|x|^\gamma}\in L^{\frac{N}{\gamma},\infty}(B_R)$, with $A\ge 0$. We consider for simplicity the case of bounded solutions. Hence we assume that $0\le \gamma<2$. Then by the properties of Marcinkiewicz spaces $f\in L^{\frac{N}{2}+\eps}(B_R)$ for some $\eps>0$.

More precisely, we consider
\begin{equation}
\label{maingex2}
\begin{cases}
-\dive\left(\frac{\nabla u}{(1+|u|)^\vartheta}\right)=\frac{A}{|x|^\gamma} &\text{in }B_R,\\[.2cm]
\ds\frac{\de u}{\de \nu}+\beta u=0 &\text{on }\de B_R.
\end{cases}
\end{equation}
Performing the change of variable $v=B(u)$, then $v$ has to satisfy
\begin{equation}
\label{maingexcv}
\begin{cases}
-\Delta v= \dfrac{A}{|x|^\gamma}   &\text{in }\Omega,\\[.2cm]
\ds\frac{\de v}{\de \nu}+ \beta F(v)=0 &\text{on }\de\Omega 
\end{cases}
\end{equation}
where
\[
F(v)= b(B^{-1}(v)) B^{-1}(v).
\]
It is easy to see that being $F$ monotone increasing, then \eqref{maingexcv} admits at most a solution. Indeed, if $v_1$ and $v_2$ are two solutions in $H^1(B_R)\cap L^\infty(B_R)$ of \eqref{maingexcv}, using $v_1-v_2$ as test function in the weak formulation of the problem, we have
\begin{multline*}
    \int_{B_R} \nabla v_i\cdot \nabla (v_1-v_2) dx+\beta \int_{\de B_R} F(v_i)(v_1-v_2)d\mathcal H^{N-1} =\int_{B_R} f(v_1-v_2)dx,\\ i=1,2.
\end{multline*}  
Then, by subtracting,
\[
\int_{B_R} |\nabla (v_1-v_2)|^2 dx+\beta \int_{\de B_R} (F(v_1)-F(v_2))(v_1-v_2)d\mathcal H^{N-1} =0.
\]
Being $F$ strictly increasing, this equality holds if and only if $v_1\equiv v_2$ in $B_R$.

We look for positive radial solutions of \eqref{maingexcv}. In this case, it reduces to
 \[
 \begin{cases}
\displaystyle -v''-\frac{N-1}{r}v'=\frac{A}{r^\gamma} \quad \text{ in }]0,R[,\\[.2cm]
v'(0)=0,\\[.2cm]
v'(R)+\beta F(v(R))=0.
\end{cases}
 \]
  By an easy computation, the solution has to be
\[
v(r)= \ds v(R)+\frac{A}{N-\gamma}\frac{R^{2-\gamma}-r^{2-\gamma}}{2-\gamma},
\]
where, due to the boundary condition, $v(R)$ must be chosen in such a way that
\begin{equation}
\label{passesempio}
F(v(R))=\frac{A}{N-\gamma}R^{1-\gamma}.    
\end{equation}
We observe that the behavior of problem \eqref{maingexcv}, and then \eqref{maingex2}, depends on $\vartheta$. Hence we distinguish two cases.
\begin{enumerate}
    \item If $0< \vartheta <1$, then
\begin{equation}
    \label{F2}
F(v)= \frac{B^{-1}(v)}{(1+B^{-1}(v))^\vartheta}
\end{equation}
is strictly increasing and unbounded. Hence the condition \eqref{passesempio} is satisfied for a unique value of $v(R)$. 
 Therefore the function 
 \[
    u(r)=B^{-1}(v(r)), \quad r\in [0,R]
    \]
is a solution of \eqref{maingex2}.
  \item If $\vartheta=1$, then \eqref{F2} is bounded and \eqref{passesempio} is satisfied only if a boundedness condition on the right-hand side of \eqref{passesempio} is in force. In other words, if
  \begin{equation}
      \label{condition}
        \frac{A}{N-\gamma}R^{1-\gamma}<\sup_{t\in [0,+\infty[} F(t),
  \end{equation}
  there exists a bounded solution of \eqref{maingexcv} and then of \eqref{maingex2}. On the other hand, the uniqueness assures that \eqref{maingexcv} may admit only a radial solution. Hence, if condition \eqref{condition} is not in force, problem \eqref{maingexcv} (and then of \eqref{maingex2}) does not admit a bounded radial solution.
\end{enumerate}

\section*{Acknowledgements}
This work has been partially supported by PRIN PNRR 2022 ``Linear and Nonlinear PDE’s: New directions and Applications'', by GNAMPA of INdAM, by START within the program of the University "Luigi Vanvitelli" reserved to young researchers, Piano strategico 2021-2023, by the ``Geometric-Analytic Methods for PDEs and Applications" project - funded by European Union - Next Generation EU  within the PRIN 2022 program (D.D. 104 - 02/02/2022 Ministero dell'Universit\`{a} e della Ricerca). This manuscript reflects only the authors' views and opinions and the Ministry cannot be considered responsible for them.

{\small

\bibliographystyle{plain}

\begin{thebibliography}{99}
\bibitem{2}
{Alvino A., Ferone V., Trombetti G.},
\textit{A priori estimates for a class of non uniformly elliptic equations},
{Atti Sem. Mat. Fis. Univ. Modena}, {46 }
({1998}), {381--391}.

\bibitem{1}
Alvino A., Boccardo  L., Ferone  V., Orsina L., Trombetti G., \textit{Existence results for nonlinear elliptic equations with degenerate coercivity}. Ann. Mat. Pura Appl.,  182 (2003),  53–79.

\bibitem{bg}  Blanchard D., Guib\'e O., \textit{Infinite valued solutions of non-uniformly elliptic problems}, Anal. Appl., 2 (2004), 227-246.

\bibitem{boc2006} Boccardo  L., Some elliptic problems with degenerate coercivity, \textit{Adv. Nonlinear Stud.}, 6 (2006), 1-12.

\bibitem{bocbrez} Boccardo, L., Brezis, H.,
\textit{Some remarks on a class of elliptic equations with degenerate coercivity}, Boll. Unione Mat. Ital. Sez. B Artic. Ric. Mat., 6 (2003),  521–530.

\bibitem{10} Boccardo L., Dall'Aglio A., Orsina L. 
\textit{Existence and regularity results for some elliptic equations with degenerate coercivity}, {Atti Sem. Mat. Fis. Univ. Modena}
{46 } (1998), {51--81}.

\bibitem{17} Dall’Aglio A., Giachetti  D., Leone C., Segura de Le\'on S.,  \textit{Quasi–linear parabolic equations with degenerate coercivity having a quadratic gradient term}, Ann. I. H. Poincar\'e 23 (2006), 97–126.

\bibitem{dpdb} Della Pietra F., di Blasio G., \textit{Existence and comparison results for non--uniformly parabolic problems},
  {Mediterranean J. Math.},  7  (2010), 325-342.
  
\bibitem{19}  Della Pietra F., di Blasio G., \textit{Existence
    results for nonlinear elliptic problems with unbounded
    coefficients}, {Nonlinear Anal. TMA}, 71 (2009), 72-87.
  
\bibitem{18} Della Pietra F., \textit{Existence results for non--uniformly elliptic equations with general growth in the gradient}, {Diff. Int. Eq.}, 21 (2008), 821-836.

\bibitem{dos1} Della Pietra F., Oliva F. and Segura de Le\'on S., \textit{Behaviour of solutions to p-Laplacian with Robin boundary conditions as p goes to 1}, Proceedings of the Royal Society of Edinburgh Section A: Mathematics, 154 (2024), 105-130.

\bibitem{do} Durastanti R., Oliva F., \textit{The Dirichlet problem for possibly singular elliptic equations with degenerate coercivity}, Adv. Differential Equations, 29 (2024), 339–388.

\bibitem{mena} 
Feo, F., Guib\'e, O., \textit{Nonlinear problems with unbounded coefficients and $L^1$ data}, Nonlinear Differential Equations Appl. 27 (2020), 49.

\bibitem{gp}  Giachetti D., Porzio M.M., \textit{Existence results for some nonuniformly elliptic equations with irregular data}, J. Math. Anal. Appl., 257 (2001), (1), 100-130.

\bibitem{cina} Huang S. ,  Su T.,  Du X.,   Zhang X., \textit{Entropy solutions to noncoercive nonlinear elliptic equations with measure data}, Electron. J. Differential Equations, 2019 (2019), 1-22.

\bibitem{hunt} R. Hunt, \textit{On $L^{(p,q)}$ spaces}, Enseign. Math., 12  (1966), 249-276.

\bibitem{merc} Mercaldo, A., Peral, I.
\textit{Existence results for semilinear elliptic equations with some lack of coercivity},
Proc. Roy. Soc. Edinburgh Sect. A 138 (2008), 569–595.

\bibitem{necas} Ne\v{c}as J., \textit{Direct methods in the theory of elliptic equations} Transl. from the French,
Springer Monographs in Mathematics, Berlin: Springer (2012).

\bibitem{31} Orsina L. , \textit{Existence results for some elliptic equations with unbounded coefficients}, Asymptotic Anal. 34 (2003), 187–198.

\bibitem{pp} Porzio M.M., Pozio M.A.,
\textit{Parabolic equations with non-linear, degenerate and space-time dependent operators}, J. Evol. Equ. 8 (2008), 31–70.

\bibitem{s} Smarrazzo F., \textit{On a class of quasilinear elliptic equations with degenerate coerciveness and measure data}, Adv. Nonlinear Stud., 18 (2018), 361-392.

\bibitem{st} Stampacchia G., \textit{Le probl\`eme de Dirichlet pour les \'equations elliptiques du second ordre \`a coefficients discontinus}, Ann. Inst. Fourier (Grenoble), 15  (1965), 189-258.

\bibitem{37} Trombetti C., \textit{Non uniformly elliptic equations with natural growth in the gradient}, Pot. Anal. 18 (2003), 391–404.

\end{thebibliography}

\end{document}